\newtheorem{theo}{Theorem}[section]
\newtheorem{coro}[theo]{Corollary}
\newtheorem{rem}[theo]{Remark}
\newtheorem{exam}[theo]{Example}
\newcommand\Set{\operatorname{\bf Set}}
\newcommand\CAT{\operatorname{\bf CAT}}
\newcommand\ACC{\operatorname{\bf ACC}}
\newcommand\Ho{\operatorname{Ho}}
\newcommand\colim{\operatorname{colim}}
\newcommand\dom{\operatorname{dom}}
\newcommand\cod{\operatorname{cod}}
\newcommand\cf{\mathcal {F}}
\newcommand\cu{\mathcal {U}}
\newcommand\ck{\mathcal {K}}
\newcommand\cl{\mathcal {L}}
\newcommand\cw{\mathcal {W}}
\date{September 21, 2011}
\begin{document}
\title[Colimits of accessible categories]
{Colimits of accessible categories}
\author[R. Par\' e and J. Rosick\'{y}]
{R. Par\' e and J. Rosick\'{y}$^*$}
\thanks{ $^*$ Supported by MSM 0021622409.} 
\address{
\newline 
Department of Mathematics and Statistics\newline 
Dalhousie University\newline
Halifax, NS, Canada, B3H 3J5\newline
pare@mathstat.dal.ca
\newline\newline
Department of Mathematics and Statistics\newline
Masaryk University, Faculty of Sciences\newline
Kotl\'{a}\v{r}sk\'{a} 2, 611 37 Brno, Czech Republic\newline
rosicky@math.muni.cz
}
 
\begin{abstract}
We show that any directed colimit of acessible categories and accessible full embeddings is accessible and, assuming the existence of arbitrarily 
large strongly compact cardinals, any directed colimit of acessible categories and accessible embeddings is accessible.
\end{abstract} 
\keywords{accessible category, directed colimit, compact cardinal}
\subjclass{18C35, 03E55}

\maketitle
 
\section{Introduction}
Accessible categories are closed under constructions of ``a limit type". More precisely, the 2-category of accessible categories and accessible
functors has all limits appropriate for 2-categories calculated in the 2-category of categories and functors (see \cite{MP}). The situation is much 
less satisfactory for colimits. The only general result is that lax colimits of strong diagrams of accessible categories and accessible functors 
exist and are calculated as the idempotent completion of the lax colimit of categories (see \cite{MP}, Theorem 5.4.7).  In this paper we show that 
any directed colimit of accessible categories and accessible full embeddings is accessible and, assuming the existence of a proper class of strongly 
compact cardinals, accessible categories are closed under directed colimits of embeddings. We do not know whether set theory is really necessary 
for the second result. We also do not know anything about general directed colimits. We will start with an example of a colimit of accessible categories 
which is not accessible (but has split idempotents).

All undefined concepts concerning accessible categories can be found in \cite{AR} or \cite{MP}. Recall that a functor $F:\ck\to\cl$ is called 
$\lambda$-accessible if $\ck$ and $\cl$ are $\lambda$-accessible categories and $F$ preserves $\lambda$-directed colimits. $F$ will be called 
\textit{strongly} $\lambda$-\textit{accessible} if, in addition, it preserves $\lambda$-presentable objects. Any $\lambda$-accessible functor
is strongly $\mu$-accessible for some regular cardinal $\mu$. $F$ is (strongly) accessible if it is (strongly) $\lambda$-accessible for some regular 
cardinal $\lambda$. $\CAT$ will denote the (non-legitimate) category of categories and functors while $\ACC$ is the (non-legitimate) category 
of accessible categories and accessible functors.
 
\begin{exam}\label{ex1.1}
{
\em
Let $\ck$ be a combinatorial model category and $\cw$ its class of weak equivalences. Then $\cw$ is an accessible category and its embedding 
$G:\cw\to\ck^\to$ into the category of morphisms of $\ck$ is accessible (see \cite {L} A.2.6.6 or \cite{R} 4.1). Let $\dom,\cod:\cw\to\ck$
be the functors assigning to each $f\in\cw$ its domain or codomain. These functors are accessible and let $\varphi:\dom\to\cod$ the natural 
transformation such that $\varphi_f=f$. Then the coinverter of $\varphi$ is the homotopy category $\Ho\ck=\ck[\cw^{-1}]$ of $\ck$. The homotopy 
category has very often split idempotents (for instance if $\ck$ is stable) and is almost never accessible; e.g., if $\ck$ is the model category 
of spectra then $\Ho\ck$ has split idempotents and is not accessible.
}
\end{exam}

\section{Directed colimits of accessible full embeddings}
\begin{theo}\label{th2.1} 
Let $F_{ij}: \ck_i \to\ck_j$, $i\leq j \in I$ be a directed diagram of accessible categories and accessible full embeddings. 
Then its colimit in $\CAT$ is accessible as are the colimit injections, and is in fact the colimit in $\ACC$.
\end{theo}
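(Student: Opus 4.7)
The plan is to realize the colimit concretely and then verify $\lambda$-accessibility directly for a well-chosen $\lambda$. Because each $F_{ij}$ is a full embedding, the colimit $\ck$ in $\CAT$ is the directed union $\bigcup_{i\in I}\ck_i$: objects are those lying in some $\ck_i$, and for $X,Y\in\ck_i$ one has $\ck(X,Y)=\ck_i(X,Y)$, independent of $i$ by fullness. Each injection $F_i\colon\ck_i\to\ck$ is again a full embedding. I would pick a regular cardinal $\lambda>|I|$ such that every $\ck_i$ is $\lambda$-accessible and every $F_{ij}$ is strongly $\lambda$-accessible; such $\lambda$ exists because $I$ is small and each accessibility condition is satisfied by a cofinal class of regular cardinals.

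The main obstacle, and the heart of the argument, is to show that $\ck$ has $\lambda$-directed colimits preserved by each $F_i$. Given a diagram $K\colon D\to\ck$ with $D$ $\lambda$-directed, set $D_i=\{d\in D : K_d\in\ck_i\}$; then $\{D_i\}_{i\in I}$ is an increasing cover of $D$ indexed by a set of cardinality $<\lambda$. The key lemma is that some $D_i$ must be cofinal in $D$: otherwise, for each $i$ one could pick $d^*_i\in D$ not dominated by any element of $D_i$, use $\lambda$-directedness together with $|I|<\lambda$ to produce a joint upper bound $d^*\in D$ of the $d^*_i$, and derive a contradiction from $d^*\in D_j$ for some $j$. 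A cofinal subset of a $\lambda$-directed poset is itself $\lambda$-directed, and by fullness the restriction of $K$ to $D_i$ is a diagram in $\ck_i$ whose colimit there exists by $\lambda$-accessibility. Strong $\lambda$-accessibility of the $F_{ij}$ ensures the value is independent of the chosen $i$, and the universal property in $\ck$ is inherited from that in $\ck_i$: any test cocone with target $L\in\ck_j$ lives in $\ck_k$ for $k\geq i,j$, where the unique factorization is produced and then regarded as a morphism in $\ck$.

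The remaining verifications are routine. An object of some $\ck_i$ which is $\lambda$-presentable there stays $\lambda$-presentable in every larger $\ck_j$ by strong accessibility, and hence in $\ck$ because $\lambda$-directed colimits in $\ck$ are computed (after cofinal restriction) in some $\ck_j$. A representative set of $\lambda$-presentables can be chosen in each $\ck_i$; their union, a set because $I$ is small, generates $\ck$ under $\lambda$-directed colimits, since every object of $\ck$ lies in some $\ck_i$ where it is such a colimit which then transports to $\ck$. This establishes $\lambda$-accessibility. For the $\ACC$-universal property, a compatible cocone $(G_i\colon\ck_i\to\cl)_{i\in I}$ in $\ACC$ induces a unique $G\colon\ck\to\cl$ from the $\CAT$ universal property; enlarging $\lambda$ so that each $G_i$ is additionally $\lambda$-accessible, the identities $G(\colim_d K_d)=G_i(\colim_{d\in D_i} K_d)=\colim_{d\in D_i} G(K_d)=\colim_d G(K_d)$ (using the second paragraph) show that $G$ is $\lambda$-accessible.
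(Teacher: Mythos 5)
Your proposal is correct and follows essentially the same route as the paper: realize the colimit as the directed union $\bigcup_i\ck_i$, prove by the same contradiction argument that some $\ck_{i_0}$ contains a cofinal part of any $\lambda$-directed diagram, compute the colimit there and verify its universal property in $\ck$ via a larger $\ck_{i_1}$, transfer $\lambda$-presentability using strong accessibility of the inclusions, and enlarge $\lambda$ for the $\ACC$-universal property. No substantive differences or gaps.
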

\begin{proof} 
We can assume the $F_{ij} $ are full inclusions. Then we want to show that $\ck=\bigcup\limits_{i\in I}\ck_i$ is accessible. Let $\kappa$ be such that 
each $\ck_i$ is $\kappa$-accessible, each inclusion $\ck_i \subseteq \ck_j $ is strongly $\kappa$-accessible , and $\kappa > |I| $.

Let $d_{mn} : K_m \to K_n$, $ m\leq n \in M$, be a $\kappa$-directed diagram in $\ck$. We claim that there is an $i_0 \in I$ and a cofinal subset 
$M_0 \subseteq M$ such that $K_m \in \ck_{i_0}$ for all $m \in M_0$. Otherwise there would be, for every $i$, an $m_i \in M$ such that 
$K_p \notin \ck_i$ for all $p\geq m_i$. Then as $M$ is $\kappa$-directed and $\kappa > |I|$, there is one $p\geq m_i$ for all $m_i$, 
and the corresponding $K_p$ is not in any $\ck_i$, a contradiction.

Now $M_0$ is $\kappa$-directed as it is cofinal in $M$ so $\colim_{m\in M_0} K_m $ exists in $\ck_{i_0}$. Now for any cocone 
$\langle k_m : K_m \to L\rangle_{m\in M}$ in $\ck$, $L$ will be in some $\ck_i$ and if we take $i_1 \geq i, i_0$ then we get a cocone 
$\langle k_m : K_m \to L\rangle_{m\in M_0}$ in $\ck_{i_1}$. As $\ck_{i_0} \subseteq \ck_{i_1} $ preserves $\kappa$-directed colimits, this cocone
factors uniquely through $\colim_{m\in M_0} K_m$, so $\colim_{m\in M_0} K_m$ is the colimit in $\ck$ as well.

If our diagram lies entirely in one $\ck_i$ to start with we can take $i_0 = i$ and $M_0 = M$, so the inclusion $\ck_i \subseteq \ck$ preserves 
$\kappa$-directed colimits.

If $K$ is $\kappa$-presentable in $\ck_i$, and $\langle K_m\rangle_{m\in M}$ is a $\kappa$-directed diagram in $\ck$, then we can choose the $i_0$ above 
so that it is also $\geq i$. Then 
\begin{align*}
\ck(\colim K_m, K)&= \ck_{i_0}(\colim K_m, K)\\
&\cong\colim \ck_{i_0}(K_m, K) = \colim \ck (K_m,K) 
\end{align*}
because $ K$ is also $\kappa$-presentable in $\ck_{i_0}$. So $K$ is $\kappa$-presentable in $\ck$ i.e. the inclusions $\ck_i \subseteq \ck$ are strongly 
$\kappa$-accessible.

Every object of $\ck$ is a $\kappa$-directed colimit of $\kappa$-presentables in some $\ck_i $ so also in $\ck$. Thus $\ck$ is $\kappa$-accessible.

Finally, $\ck$ is the colimit of $\ck_i $ in $\ACC$, for if $\langle G_i : \ck_i \to \cl\rangle_{i \in L}$ is a compatible family 
of $\kappa_i$-accessible functors, we can choose the $\kappa$ in the above argument to be larger than all $\kappa_i$, and then the extension 
$G:\ck \to \cl$ will preserve $\kappa$-directed colimits.
\end{proof}

\begin{exam}\label{ex2.2}
{
\em
Let ${\bf n} $ be the ordered set $\{1<2<\cdots<n\}$ and consider the chain of accessible embeddings
$$
\Set \to \Set^{\bf 2} \to \Set^{\bf 3} \to \cdots \to \Set^{{\bf n}} \to \cdots
$$
where the transition for ${\bf n}$ to ${\bf n+1} $ extends a path of length $n$ to one of length $n+1$ by adding an identity at the end. The colimit 
can be identified with the category of infinite paths $A_1\to A_2\to A_3\to\cdots$ which are eventually constant, i.e. there is an $N$ such that
$A_n\to A_{n+1}$ is an identity for all ${n\geq N}$. It is $\omega_1$-accessible but not $\omega$-accessible. 
}
\end{exam}

\begin{rem}\label{re2.3}
{
\em
Theorem \ref{th2.1} can be extended to directed colimits of embeddings $F_{ij}$ such that for each commutative triangle 
$$
\xymatrix@=3pc{
F_{ij}A \ar[rr]^{F_{ij}(h)}
\ar[dr]_{F_{ij}(f)} && F_{ij}C\\
& F_{ij}B \ar[ur]_{g}
}
$$
there is $\overline{g}:B\to C$ such that $F_{ij}(\overline{g})=g$.

In fact, we can choose $m_0\in M_0$ and repeat the argument above to get $i_1>i_0$ and a cofinal subset $M_1\subseteq M_0$ such that
$d_{m_0m}\in\ck_{i_1}$ for each $m\in M_1$. Since $d_{mn}\in\ck_{i_1}$ for each $m\leq n$ from $M_1$ (because $d_{mn}d_{m_0m}=d_{m_0n}$),
$\colim_{m\in M_1}K_m$ exists in $\ck_{i_1}$. Using this colimit in the proof above instead of $\colim_{m\in M_0}K_m$, we get the extension
of  \ref{th2.1}.
}
\end{rem}

\section{Directed colimits of accessible embeddings}

A cardinal $\mu$ is called \textit{strongly compact} if for every set $I$, every $\mu$-complete filter on $I$ is contained  in a $\mu$-complete
ultrafilter on $I$. Often, compact cardinals are called strongly compact. A cardinal $\mu$ is called $\alpha$-\textit{strongly compact} 
if for every set $I$, every $\mu$-complete filter on $I$ is contained  in an $\alpha$-complete ultrafilter on $I$ ($\mu$ is called
$L_{\alpha\omega}$-compact in \cite{EM}). Clearly, $\mu$ is compact if and only if it is strongly $\mu$-compact.

\begin{theo}\label{th3.1}
Let $F_{ij}:\ck_i\to\ck_j$, $i\leq j\in I$ be a directed diagram of strongly $\lambda$-accessible embeddings and $F_i:\ck_i\to\ck$ its colimit
in $\CAT$. Let $\lambda\vartriangleleft\mu$ be a strongly $\alpha$-compact cardinal where $\alpha=\max\{\lambda,|I|^+\}$.

Then $\ck$ is $\mu$-accessible and $F_i$ are strongly $\mu$-accessible.
\end{theo}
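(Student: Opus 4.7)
The plan is to follow the skeleton of Theorem~\ref{th2.1}, with its ``$\kappa>|I|$ plus $\kappa$-directedness'' cardinality trick replaced by an $\alpha$-complete ultrafilter argument furnished by strong $\alpha$-compactness of~$\mu$. The crucial obstruction compared to Theorem~\ref{th2.1} is that the $F_{ij}$ are no longer full, so concentrating the objects of a $\mu$-directed diagram in $\ck$ inside some $\ck_{i_0}$ does not yet produce a sub-diagram of $\ck_{i_0}$; the transition morphisms must also be concentrated.

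\textbf{Concentration lemma.} I would first aim for: given a $\mu$-directed diagram $D:M\to\ck$, there exist $i^*\in I$ and a cofinal $M^*\subseteq M$ such that $D|_{M^*}$ factors through $F_{i^*}:\ck_{i^*}\to\ck$. The filter on $M$ generated by the up-sets $M_{\geq m}$ is $\mu$-complete by $\mu$-directedness of $M$, hence $\alpha$-complete; strong $\alpha$-compactness extends it to an $\alpha$-complete ultrafilter $\cu$ on $M$. Because $|I|<\alpha$, any function $M\to I$ is bounded above by a fixed index on some set in $\cu$. Applying this to $m\mapsto i_m$ with $K_m\in\ck_{i_m}$ gives $i_0$ and $X_0=\{m:K_m\in\ck_{i_0}\}\in\cu$; applying it for each $m\in X_0$ to $n\mapsto j(m,n)$ with $d_{mn}\in\ck_{j(m,n)}$ gives $i(m)\geq i_0$ and $B_m=\{n\geq m:d_{mn}\in\ck_{i(m)}\}\cap X_0\in\cu$; and applying it to $m\mapsto i(m)$ gives a common bound $i^*\geq i_0$ and $X_1=\{m\in X_0:i(m)\leq i^*\}\in\cu$. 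One then extracts the cofinal $M^*\subseteq X_1$ by a transfinite selection, at each step picking an element of $X_1$ that lies above the prescribed target \emph{and} above all previously chosen elements \emph{and} in the intersection $\bigcap_{\beta<\gamma}B_{m_\beta}$, each such intersection staying in $\cu$ as long as $\gamma<\alpha$ by $\alpha$-completeness.

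\textbf{From the lemma to the conclusion.} Everything else mirrors the end of the proof of Theorem~\ref{th2.1}. Because $\lambda\vartriangleleft\mu$, each $\ck_i$ is $\mu$-accessible and each $F_{ij}$ is strongly $\mu$-accessible. So $\colim D|_{M^*}$ exists in $\ck_{i^*}$ and, applying $F_{i^*}$, gives the colimit of $D$ in $\ck$; a competing cocone is lifted to some $\ck_k\supseteq\ck_{i^*}$ by a second application of the concentration lemma (to the cocone data), and preservation of $\mu$-directed colimits by $F_{i^*k}$ yields the required factorization. A $\lambda$-presentable $K$ of some $\ck_i$ becomes $\mu$-presentable in $\ck$ by the same strategy: a morphism $K\to\colim L_m$ in $\ck$ is concentrated into a $\ck_{i^*}$ containing $K$ and the diagram, and $\lambda$-presentability of $K$ in $\ck_{i^*}$ provides the factorization. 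Every object of $\ck$ is $F_i(K)$ for some $K\in\ck_i$ which is a $\lambda$-directed colimit of $\lambda$-presentables, and $F_i$ sends this to a $\mu$-directed colimit of $\mu$-presentables; this gives $\mu$-accessibility of $\ck$ and strong $\mu$-accessibility of each $F_i$.

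\textbf{Main obstacle.} The concentration lemma is where all the work is, and its delicate point is the passage from the ultrafilter $\cu$ to an actual cofinal subset $M^*$ on which \emph{every} transition morphism lies in a single $\ck_{i^*}$. Objects are easily concentrated by the $|I|<\alpha$ boundedness trick, but morphisms are indexed by pairs, forcing the iterated application of the same trick and then a transfinite selection whose bookkeeping must be coordinated with the $\alpha$-completeness of $\cu$. This is the only place in the proof where strong $\alpha$-compactness is essentially used.
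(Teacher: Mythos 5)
Your overall architecture --- extend the filter of up-sets to an $\alpha$-complete ultrafilter, concentrate the diagram into a single $\ck_{i^*}$, then transfer presentability along $\lambda\vartriangleleft\mu$ --- is the paper's, but your concentration lemma is both stronger than what the method can deliver and not established by your argument. The transfinite selection can be sustained only for $<\alpha$ stages: at stage $\gamma$ you need $X_1\cap\bigcap_{\beta<\gamma}B_{m_\beta}$ to be nonempty, and $\alpha$-completeness of $\cu$ guarantees this only while $|\gamma|<\alpha$. So your recursion produces at most $\alpha$ elements. But a cofinal subset of a $\mu$-directed poset with no greatest element must have cardinality at least $\mu$ (a smaller cofinal set would have an upper bound, which would then be a maximum of $M$), and more generally at least the cofinality of $M$, which is not bounded by $\alpha$. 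Hence the set $\{m_\gamma\}$ you construct cannot be cofinal, and there is no way to continue past stage $\alpha$ while keeping all the $B_{m_\beta}$ in play. In effect you are demanding a \emph{full} cofinal subposet of $M$ all of whose transition morphisms land in one $\ck_{i^*}$; nothing in the hypotheses provides that.

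The paper's proof sidesteps this by not insisting on a full subposet. It keeps the entire $\cu$-large set $U$ as the object set --- cofinality is then free, since $U\cap M_m\in\cu$ is nonempty for every $m\in M$, with no selection needed --- and retains only those transition morphisms $d_{mn}$, $m,n\in U$, that lie in $\ck_{\tilde U}$. This is a non-full subcategory of the poset $M$, and it is only claimed to be $\lambda$-filtered: for $X\subseteq U$ with $|X|<\lambda\leq\alpha$ the set $U\cap\bigcap_{x\in X}M_x^{\tilde U}$ is still in $\cu$, hence nonempty, and any of its elements bounds $X$ \emph{inside the subcategory}. Intersecting fewer than $\lambda$ sets is exactly within reach of $\alpha$-completeness, and $\lambda$-filteredness suffices because $\ck_{\tilde U}$ is $\lambda$-accessible and the $F_{ij}$ preserve $\lambda$-directed colimits. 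With this modification the rest of your outline goes through essentially as in the paper. One further small correction: at the end, an object $K\in\ck_i$ should first be rewritten as a $\mu$-directed colimit of $\mu$-presentables of $\ck_i$ (using $\lambda\vartriangleleft\mu$, which makes $\ck_i$ $\mu$-accessible) before applying $F_i$; the image under $F_i$ of a merely $\lambda$-directed colimit of $\lambda$-presentables is not a $\mu$-directed colimit.
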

\begin{proof}
First, we will show that $\ck$ has $\mu$-directed colimits. Let $d_{mn}:K_m\to K_n$, $m\leq n\in M$ be a $\mu$-directed diagram in $\ck$. 
Let $\cf$ be the filter on $M$ generated by sets $M_m=\{k\in M | m\leq k\}$, $m\in M$. Since $M$ is $\mu$-directed, $\cf$ is $\mu$-complete 
and thus it is contained in an $\alpha$-complete ultrafilter $\cu$ on $M$. Put $M_m^i=\{k\in M_m | d_{mk}\in\ck_i\}$ for $m\in M$ and
$i\in I$. Since $M_m=\bigcup\limits_{i\in I} M_m^i$, $|I|<\alpha$ and $\cu$ is $\alpha$-complete, there is $\tilde{m}\in I$ such that 
$M_m^{\tilde{m}}\in\cu$. Using the $\alpha$-completeness of $\cu$ again, we get $U\in\cu$ such that $\tilde{m}=\tilde{n}$ for each $m,n\in U$.
We denote this common value of $\tilde{m}$ by $\tilde{U}$. Restrict our starting diagram by taking $d_{mn}:K_m\to K_n$ such that $m,n\in U$
and $d_{mn}\in\ck_{\tilde{U}}$. We get a subdiagram of the starting diagram and we will show that this subdiagram is $\lambda$-directed.
Consider a subset $X\subseteq U$ with $|X|<\lambda$. Then
$$
V=U\cap\bigcap\limits_{x\in X}M_x^{\tilde{U}}
$$
belongs to $\cu$. Thus $V\neq\emptyset$ and for $m\in V$, we have $d_{xm}\in\ck_{\tilde{U}}$ for each $x\in X$. Thus our subdiagram
is $\lambda$-directed. Let $K$ be its colimit in $\ck_{\tilde{U}}$. Since $F_{\tilde{U}}$ preserves $\lambda$-directed colimits, $K$ is a colimit 
of our subdiagram in $\ck$. For each $m\in M$, there is $n\in U\cap M_m$ because the intersection belongs to $\cu$. Thus our subdiagram
is cofinal in the whole diagram, which means that $K$ is a colimit of the starting diagram in $\ck$. 

We have proved that $\ck$ has $\mu$-directed colimits. Moreover, since any $\mu$-directed colimit in $\ck$ is calculated in some $\ck_i$, 
the embeddings $F_i$ preserve $\mu$-directed colimits. Since $\lambda\vartriangleleft\mu$, the embeddings $F_{ij}$ are strongly $\mu$-accessible. 
This means that if $A$ is $\mu$-presentable in $\ck_i$ then it is $\mu$-presentable in $\ck_j$ for all $i\leq j\in I$. 
Since any $\mu$-directed colimit in $\ck$ is calculated in some $\ck_i$, $A$ is $\mu$-presentable in $\ck$ as well. Thus the embeddings $F_i$ preserve 
$\mu$-presentable objects. Since each $K\in\ck$ belongs to some $\ck_i$ and it is a $\mu$-directed colimit of $\mu$-presentable objects in $\ck_i$, 
$K$ is a $\mu$-directed colimit of $\mu$-presentable objects in $\ck$. Thus $\ck$ is $\mu$-accessible.
\end{proof}

\begin{coro}\label{cor3.2}
Assuming the existence of arbitrarily large compact cardinals, $\ACC$ is closed in $\CAT$ under directed colimits of embbedings.
\end{coro}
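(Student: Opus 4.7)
The plan is to reduce the corollary to Theorem~\ref{th3.1} by a judicious choice of cardinals $\lambda\vartriangleleft\mu$, then to verify the universal property in $\ACC$ by enlarging $\mu$ once more for each test cocone.

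Start with a directed diagram $F_{ij}:\ck_i\to\ck_j$ of accessible embeddings indexed by a set $I$. Since every accessible functor is strongly $\kappa$-accessible for some regular $\kappa$ (as recalled in the introduction), and since $I$ is a set, the supremum of the regular cardinals needed so that each $\ck_i$ is $\lambda$-accessible and each $F_{ij}$ is strongly $\lambda$-accessible is a cardinal; choose a regular $\lambda$ above this sup with $\lambda>|I|$. Invoking the existence of arbitrarily large compact cardinals, pick a strongly compact $\mu$ with $\lambda\vartriangleleft\mu$ and $\mu>\max\{\lambda,|I|^+\}$; since every strongly compact cardinal is strongly $\alpha$-compact for every $\alpha\leq\mu$, the hypotheses of Theorem~\ref{th3.1} are satisfied. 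The theorem then yields that the colimit $\ck=\colim\ck_i$ in $\CAT$ is $\mu$-accessible and that the injections $F_i:\ck_i\to\ck$ are strongly $\mu$-accessible. In particular $\ck$ and all $F_i$ lie in $\ACC$.

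To upgrade this to the statement that $\ck$ is also the colimit in $\ACC$, consider an arbitrary compatible cocone $\langle G_i:\ck_i\to\cl\rangle_{i\in I}$ of accessible functors into an accessible $\cl$, and let $G:\ck\to\cl$ be the unique factorization in $\CAT$. I would enlarge $\mu$ if necessary, still strongly compact, so that in addition $\cl$ is $\mu$-accessible and each $G_i$ is $\mu$-accessible. The proof of Theorem~\ref{th3.1} shows that every $\mu$-directed colimit in $\ck$ is computed, on a cofinal $\lambda$-directed subdiagram, inside some $\ck_{\tilde U}$. Postcomposing with $G$ and using $G\circ F_{\tilde U}=G_{\tilde U}$, which preserves that colimit by choice of $\mu$, shows that $G$ preserves $\mu$-directed colimits, hence is $\mu$-accessible.

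The main obstacle is the cardinal bookkeeping, not any new categorical idea: one must coordinate the strong $\lambda$-accessibility of the $F_{ij}$, the sharp inequality $\lambda\vartriangleleft\mu$, and the strong $\max\{\lambda,|I|^+\}$-compactness of $\mu$, and then be able to enlarge $\mu$ further for each incoming cocone. The unboundedness of compact cardinals is precisely what lets all these constraints be met simultaneously, first for the diagram itself and then independently for every test cocone out of it; once they are met, everything reduces to Theorem~\ref{th3.1}.
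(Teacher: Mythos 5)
Your overall route is the paper's: choose $\lambda$ so that all $F_{ij}$ are strongly $\lambda$-accessible, pick a compact $\mu$ above $\max\{\lambda,|I|^+\}$, feed this into Theorem~\ref{th3.1}, and then verify the universal property in $\ACC$. One small omission: you simply ``pick a strongly compact $\mu$ with $\lambda\vartriangleleft\mu$'' without saying why the sharp inequality can be arranged; the paper gets it for free from the fact that compact cardinals are inaccessible together with \cite{MP}~2.3.4, and you should say this (arbitrarily large compactness alone gives $\mu>\lambda$, not $\lambda\vartriangleleft\mu$).

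The genuine gap is in your verification that the induced $G:\ck\to\cl$ is accessible. You enlarge $\mu$ so that each $G_i$ preserves $\mu$-directed colimits, and then claim that $G_{\tilde{U}}$ ``preserves that colimit by choice of $\mu$.'' But the colimit in question is the one computed inside $\ck_{\tilde{U}}$ over the cofinal subdiagram, and that subdiagram is only guaranteed to be $\lambda$-directed by the proof of Theorem~\ref{th3.1}; a functor preserving $\mu$-directed colimits need not preserve a $\lambda$-directed one for $\lambda<\mu$, so the step does not follow as written. (You cannot apply $G_{\tilde{U}}$ to the full $\mu$-directed diagram either, since only the subdiagram lies in $\ck_{\tilde{U}}$.) There are two standard repairs, and the paper's back-reference to the proof of Theorem~\ref{th2.1} implicitly uses the first: for each test cocone, enlarge $\lambda$ to some $\lambda'$ for which all $G_i$ are $\lambda'$-accessible as well, then rerun Theorem~\ref{th3.1} with a compact $\mu'\vartriangleright\lambda'$, so that the cofinal subdiagram is $\lambda'$-directed and $G_{\tilde{U}}$ preserves its colimit. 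Alternatively, observe that since your $\mu$ is fully compact the ultrafilter $\cu$ may be taken $\mu$-complete, which makes the cofinal subdiagram $\mu$-directed rather than merely $\lambda$-directed, after which your postcomposition argument goes through. Either fix is short, but one of them is needed.
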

\begin{proof}
Let $F_{ij}:\ck_i\to\ck_j$, $i\leq j\in I$ be a directed diagram of embeddings in $\ACC$. There is a regular cardinal $\lambda$ such that
all functors $F_{ij}$ are strongly $\lambda$-accessible (see \cite{AR} 2.19). There is a compact cardinal $\mu>\lambda,|I|^+$. Since
$\mu$ is inaccessible (see \cite{J}), \cite{MP} 2.3.4 implies that $\lambda\vartriangleleft\mu$. Following \ref{th3.1}, a colimit $\ck$
of $F_{ij}$ in $\CAT$ is $\mu$-accessible and $F_i:\ck_i\to\ck$ are $\mu$-accessible as well. Thus $F_i:\ck_i\to\ck$ is a colimit
in $\ACC$ (see the proof of \ref{th2.1}).
\end{proof}

\begin{exam}\label{ex3.3}
{
\em
Consider the following countable chain of locally finitely presentable categories and finitely accessible functors
$$
\Set \xrightarrow{\quad  F_{01}\quad} \Set^2\xrightarrow{\quad F_{12}\quad} \dots\Set^n\xrightarrow{\quad F_{nn+1}\quad}\dots 
$$
Here, $\Set$ is the category of sets, $F_{nn+1}(X_1,\dots,X_n)=(X_1,\dots,X_n,X_n)$ and $F_{nn+1}(f_1,\dots,f_n)=(f_1,\dots,f_n,f_n)$
is the action of $F_{nn+1}$ on objects and morphisms. The colimit $\Set^{<\omega}$ in $\CAT$ consists of sequences $(X_n)_{n\in\omega}$ 
which are eventually constant, i.e., there is $n\in\omega$ such that $X_n=X_m$ for all $n\leq m$. Similarly, morphisms are eventually constant 
sequences $(f_n)_{n\in\omega}$ of mappings. Following \ref{th2.1}, $\Set^{<\omega}$ is accessible assuming the existence 
of a strongly $\omega_1$-compact cardinal.

We do not know whether the accessibility of $\Set^{<\omega}$ depends on set theory.
}
\end{exam}

\end{document}